\title[The 2nd coeff. of the Alex.\ pol.\ as a satellite obstruction]{The second coefficient of the Alexander polynomial as a satellite obstruction}
\author{Lukas Lewark}
\address{ETH Z\"urich, R\"amistrasse 101, 8092 Z\"urich, Switzerland}
\email{\myemail{llewark@math.ethz.ch}}
\urladdr{\url{https://people.math.ethz.ch/~llewark/}}
\keywords{Alexander polynomial, Hopf plumbings, satellite knots, arborescent knots, positive braids.}
\subjclass{57K10, 57K14}
\let\cref\Cref
\crefname{subsection}{section}{sections}
\Crefname{subsection}{Section}{Sections}
\Crefname{enumi}{}{}
\crefname{equation}{}{}
\newcommand{\myemail}[1]{\href{mailto:#1}{#1}}
\newcommand{\qua}{\hskip 0.4em \ignorespaces}
\def\arxiv#1{\relax\ifhmode\unskip\qua\fi
\href{http://arxiv.org/abs/#1}%
{\tt arXiv:\penalty -100\unskip#1}}
\def\MR#1{\relax\ifhmode\unskip\qua\fi
\href{https://mathscinet.ams.org/mathscinet-getitem?mr=#1}{\tt MR#1}}
\def\ZB#1{\relax\ifhmode\unskip\qua\fi
\href{https://zbmath.org/?q=an:#1}{\tt Zbl\:#1}}
\def\xox#1{\csname xx#1\endcsname}
\renewenvironment{thebibliography}[1]{
  \begin{oldthebibliography}{#1}\small
    \setlength{\itemsep}{.5ex}
    \setlength{\parskip}{0em}
}
{
  \end{oldthebibliography}
}
  \def\unskip{}%
  \def\\{}%
  \def\texttt#1{<#1>}%
\let\stdthebibliography\thebibliography
\let\stdendthebibliography\endthebibliography
\numberwithin{equation}{section}
\declaretheorem{lemma}
\newtheorem{theorem}[lemma]{Theorem}
\newtheorem{corollary}[lemma]{Corollary}
\newtheorem{proposition}[lemma]{Proposition}
\theoremstyle{definition}
\newtheorem{definition}[lemma]{Definition}
\newcommand{\Pos}{\mathcal{P}}
\newcommand{\arc}{h}
\begin{document}
\begin{abstract}
A set $\Pos$ of links is introduced, containing positive braid links as well as arborescent positive Hopf plumbings. It is shown that for links in $\Pos$,
the leading and the second coefficient of the Alexander polynomial have opposite sign.
It follows that certain satellite links, such as $(n,1)$-cables, are not in $\Pos$.
\end{abstract}
\maketitle
In this short note, we introduce a certain set $\Pos$ of fibered links in $S^3$,
which in particular contains non-split positive braid links
and arborescent positive Hopf plumbings, and is closed under connected sum.
We then prove the following.%
\begin{theorem}\label{thm:main}
For all links in $\Pos$,
the second coefficient of the Alexander polynomial is negative.
\end{theorem}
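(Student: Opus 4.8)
The plan is to reduce the statement to a positivity property of the homological monodromy and then to prove that property by induction along the plumbing construction of $\Pos$. Since every link in $\Pos$ is fibered, Milnor's theorem gives that its Alexander polynomial, suitably normalized (with signs chosen so that the leading coefficient is $+1$), is the monic characteristic polynomial $\Delta(t) = \det(tI - h_*)$ of the action $h_*$ of the monodromy on $H_1$ of the fiber. The second coefficient is then $-\operatorname{tr}(h_*)$, so the theorem is equivalent to the assertion that $\operatorname{tr}(h_*) > 0$ for every $L \in \Pos$. Concretely, writing $V$ for a Seifert matrix of the fiber, one has $h_* = V(V^T)^{-1}$, so the goal becomes $\operatorname{tr}(V(V^T)^{-1}) > 0$. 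The base case is the positive Hopf band, with $V = (-1)$, $h_* = 1$ and $\operatorname{tr}(h_*) = 1$.

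Next I would use that every element of $\Pos$ is obtained from positive Hopf bands by a sequence of positive Hopf plumbings together with connected sums, the topological content being that plumbing composes monodromies, here with a single right-handed Dehn twist (Stallings, Gabai). For a connected sum the fiber is a boundary connected sum, so $V = V_1 \oplus V_2$ and $\operatorname{tr}(h_*) = \operatorname{tr}((h_1)_*) + \operatorname{tr}((h_2)_*)$, whence positivity is preserved. The essential step is a single positive Hopf plumbing, which enlarges the Seifert matrix to the block-triangular form $V' = \left(\begin{smallmatrix} V & w \\ 0 & -1 \end{smallmatrix}\right)$, where the column $w$ records the linkings of the core of the new band with the existing homology basis. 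A direct computation with $h'_* = V'(V'^T)^{-1}$ then yields the convention-robust recursion
\[
\operatorname{tr}(h'_*) = \operatorname{tr}(h_*) + 1 + w^{T} V^{-1} w.
\]

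Summing this over the $k = b_1(F)$ plumbings gives $\operatorname{tr}(h_*) = k + \sum_j w_j^{T} V_{j-1}^{-1} w_j$, in which the first term vanishes. Hence the entire theorem reduces to the single inequality $w^{T} V^{-1} w \ge -1$ at each plumbing step: the remaining $k-1$ terms are then each at least $-1$, whence $\operatorname{tr}(h_*) \ge k - (k-1) = 1 > 0$. Setting $u := V^{-1}w \in \mathbb{Z}^{\dim V}$, this inequality reads $\tfrac12\, u^{T}(V + V^{T})\,u \ge -1$, a statement about the even symmetric form $V + V^{T}$ (whose diagonal entries are $-2$). For a chain of bands, e.g.\ the fiber of a $(2,q)$-torus link, one computes $u = -(1,\dots,1)^{T}$ and $u^{T}(V+V^{T})u = -2$, so the trace stays equal to $1$, consistent with the known value of the second coefficient for these links.

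The main obstacle is exactly this inequality: $V^{-1}$ depends on the whole surface built so far, whereas $w$ is local to the new band, so bounding their pairing forces one to control how the new Hopf band attaches. This is precisely where the defining arborescent and positive-braid structure of $\Pos$ must enter, constraining the intersection pattern of the new core with the existing curves so that $u = V^{-1}w$ has square at least $-2$ in the form $V + V^{T}$; I expect this to be provable by an induction tracking the sign pattern that the plumbing tree (respectively the brick diagram of the braid) forces on $V^{-1}$. That some such hypothesis is genuinely needed, rather than mere positivity of the Hopf bands, is confirmed by the failure of the conclusion for the $(n,1)$-cables of the abstract: the inequality cannot hold for arbitrary plumbings of positive Hopf bands, so the combinatorial restriction built into $\Pos$ is indispensable.
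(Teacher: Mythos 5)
Your reduction to the monodromy trace and the plumbing recursion $\operatorname{tr}(h'_*) = \operatorname{tr}(h_*) + 1 + w^{T}V^{-1}w$ are correct (and a nice reformulation), but the proof has a genuine gap exactly where you flag it: the inequality $w^{T}V^{-1}w \ge -1$ is asserted, not proven, and your plan for proving it misreads what $\Pos$ is. The set $\Pos$ is not defined by arborescent or positive-braid combinatorics --- those are merely families later shown to be contained in it. It is defined recursively by the condition (*): the coefficient $c$ of $t^{b_1(\Sigma_-)/2}$ in $\Delta_{L_-}$ is at most $1$, where $L_-$ bounds the surface obtained by cutting the fiber of $L_0$ along the plumbing arc. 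Comparing your recursion with the skein-relation computation (which gives $\beta(L_+) = c - \alpha(L_0) + \beta(L_0)$) shows that $c = -\,w^{T}V^{-1}w$, so the inequality you are missing is literally equivalent to (*). In other words, the "main obstacle" you identify is the defining hypothesis of $\Pos$, handed to you for free; trying to rederive it from sign patterns in $V^{-1}$ forced by a plumbing tree or brick diagram would at best reprove the containment results of the paper's Proposition, and cannot work for general elements of $\Pos$, which need not arise from any such combinatorial structure.

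The paper's own argument avoids Seifert matrices entirely: the triple $(L_+, L_0, L_-)$ produced by the plumbing is a Conway skein triple, and equating the two top coefficients in the skein relation yields $\alpha(L_+) = \alpha(L_0)$ and $\beta(L_+) = c - \alpha(L_0) + \beta(L_0)$; induction gives $\alpha(L_0)=1$, $\beta(L_0)\le -1$, and (*) gives $c\le 1$, hence $\beta(L_+)\le -1$. To salvage your approach you would need to (i) identify $c$ with $\det(V_-)$ for a Seifert matrix $V_-$ of $\Sigma_-$ and then with $-\,w^{T}V^{-1}w$, and (ii) invoke (*) at that point --- after which your argument becomes a matrix-level translation of the paper's skein computation rather than an independent route. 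One further small caution: your sentence "in which the first term vanishes" should say that the $j=1$ summand is absent (the base Hopf band contributes no $w$), not that $k$ vanishes; as written the bookkeeping $\operatorname{tr}(h_*)\ge k-(k-1)$ only makes sense with that correction.
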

Here, the \emph{Alexander polynomial} $\Delta_L(t)$ is an integer Laurent polynomial in $t^{1/2}$ 
associated with an oriented link~$L\subset S^3$. We use the Conway normalization,
i.e.~$\Delta_U(t) = 1$ for the unknot $U$, and the \emph{skein relation}
\[
\Delta_{L_+}(t) = 
\Delta_{L_-}(t) +
(t^{1/2}  - t^{-1/2}) \Delta_{L_0}
\]
\begin{figure}[h]
\begingroup%
  \makeatletter%
  \providecommand\color[2][]{%
    \errmessage{(Inkscape) Color is used for the text in Inkscape, but the package 'color.sty' is not loaded}%
    \renewcommand\color[2][]{}%
  }%
  \providecommand\transparent[1]{%
    \errmessage{(Inkscape) Transparency is used (non-zero) for the text in Inkscape, but the package 'transparent.sty' is not loaded}%
    \renewcommand\transparent[1]{}%
  }%
  \providecommand\rotatebox[2]{#2}%
  \newcommand*\fsize{\dimexpr\f@size pt\relax}%
  \newcommand*\lineheight[1]{\fontsize{\fsize}{#1\fsize}\selectfont}%
  \ifx\svgwidth\undefined%
    \setlength{\unitlength}{226.31483423bp}%
    \ifx\svgscale\undefined%
      \relax%
    \else%
      \setlength{\unitlength}{\unitlength * \real{\svgscale}}%
    \fi%
  \else%
    \setlength{\unitlength}{\svgwidth}%
  \fi%
  \global\let\svgwidth\undefined%
  \global\let\svgscale\undefined%
  \makeatother%
  \begin{picture}(1,0.2552121)%
    \lineheight{1}%
    \setlength\tabcolsep{0pt}%
    \put(0,0){\includegraphics[width=\unitlength,page=1]{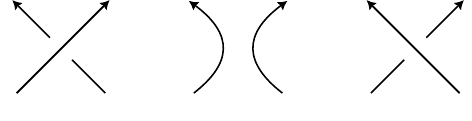}}%
    \put(0.14030253,0.0077326){\color[rgb]{0,0,0}\makebox(0,0)[t]{\lineheight{2.57999992}\smash{\begin{tabular}[t]{c}$L_+$\end{tabular}}}}%
    \put(0.51683279,0.0077326){\color[rgb]{0,0,0}\makebox(0,0)[t]{\lineheight{2.57999992}\smash{\begin{tabular}[t]{c}$L_0$\end{tabular}}}}%
    \put(0.89243512,0.0077326){\color[rgb]{0,0,0}\makebox(0,0)[t]{\lineheight{2.57999992}\smash{\begin{tabular}[t]{c}$L_-$\end{tabular}}}}%
  \end{picture}%
\endgroup%

\caption{The links $L_+, L_0, L_-$ appearing in the skein relation.}
\label{fig:skein}
\end{figure}

\noindent holds, whenever $L_{\pm}, L_0$ are three links as shown in \cref{fig:skein}.
These equations determine $\Delta_L(t)$ uniquely for all links $L$.
Non-zero Alexander polynomials are of the form
\[
\Delta_L(t) = 
a_d t^d + a_{d-1} t^{d-1} + \ldots + a_{-d+1} t^{-d+1} + a_{-d} t^{-d}
\]
with $d(L) = d \geq 0$ an integer and $a_i = a_{-i}$ if $L$ has an odd number of components,
and $d(L) \geq 1/2$ a half-integer and $a_i = -a_{-i}$ if $L$ has an even number of components.
For $\Delta_L\neq 0$, let us call $\alpha(L) = a_d \neq 0$ the \emph{leading coefficient} and
$\beta(L) = a_{d-1}$ the \emph{second coefficient}.

It is well-known that for fibered links $L$ (such as the links in $\Pos$),
the Alexander polynomial $\Delta_L$ is non-zero,
$d(L)$ equals $b_1(\Sigma)/2$ for $\Sigma\subset S^3$ a fiber surface of $L$,
and $\alpha(L) = \pm 1$.
We shall see in the proof of \cref{thm:main} that $\alpha(L) = 1$ for all $L\in \Pos$.
Thus, if one prefers the convention that the Alexander polynomial is only well-defined up
to multiplication with $\pm t^k$, one may state \cref{thm:main} as follows:
for all links $L$ in $\Pos$, the leading and second coefficient of $\Delta_L$ have opposite sign
(i.e.~$\alpha(L)\cdot \beta(L) < 0$).

\cref{thm:main} and its proof are inspired by Ito's theorem
\cite[Corollary~2]{zbMATH07504315} that $-\beta(L)$
equals the number of prime connected summands of $L$ if $L$ is a positive braid knot.
Our main motivation for \cref{thm:main} is the following obstruction.
Here, we write $P(K)$ for the \emph{satellite link} with pattern $P$ and companion $K$,
for $P$ a link in the solid torus, and $K$ a knot in $S^3$.
\begin{corollary}\label{cor:satellite}
Let $P$ be a pattern with winding number not equal to $\pm 1$ such that
the product of the leading and the second coefficient of $\Delta_{P(U)}$ is non-negative.
Then $P(K)\not\in \Pos$ for all knots $K$.
\end{corollary}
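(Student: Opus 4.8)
The plan is to reduce the statement to the multiplicativity of the Alexander polynomial under the satellite operation, and then to track only its top two coefficients. Recall the well-known satellite formula: if $P$ is a pattern of winding number $w$ and $K$ is a companion knot, then
\[
\Delta_{P(K)}(t) = \Delta_{P(U)}(t)\cdot \Delta_K(t^w).
\]
Since $K$ is a knot, $\Delta_K$ is symmetric, so $\Delta_K(t^w) = \Delta_K(t^{|w|})$ and we may assume $w\geq 0$. (The hypothesis implicitly presupposes $\Delta_{P(U)}\neq 0$; note also that if $\Delta_{P(U)}=0$, then $\Delta_{P(K)}=0$, so $P(K)$ is not fibered and hence not in $\Pos$.) As $w\neq\pm 1$, two cases remain. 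If $w=0$, then $\Delta_K(t^0)=\Delta_K(1)=1$ by the Conway normalization, whence $\Delta_{P(K)}=\Delta_{P(U)}$; the two polynomials share their leading and second coefficients, so $\alpha(P(K))\cdot\beta(P(K))\geq 0$ by hypothesis.

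For the remaining case $w\geq 2$, write $\Delta_{P(U)}(t)=\sum_i a_i t^i$ with top degree $d$, and $\Delta_K(s)=\sum_j c_j s^j$ with top degree $e$. The exponents occurring in $\Delta_{P(U)}$ are spaced by $1$, whereas those in $\Delta_K(t^w)$ are multiples of $w$ and hence spaced by $w\geq 2$. Reading off the top two coefficients of the product, the leading term is $a_d c_e\, t^{d+we}$, so $\alpha(P(K))=\alpha(P(U))\cdot\alpha(K)$. For the coefficient of $t^{d+we-1}$, any contribution $a_i c_j$ must satisfy $(d-i)+w(e-j)=1$ with $d-i,\,e-j\geq 0$ integers and $w\geq 2$; this forces $j=e$ and $i=d-1$. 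Hence $\beta(P(K))=\beta(P(U))\cdot\alpha(K)$, and therefore
\[
\alpha(P(K))\cdot\beta(P(K))
= \bigl(\alpha(P(U))\cdot\beta(P(U))\bigr)\cdot\alpha(K)^2 \geq 0,
\]
using $\alpha(K)^2>0$ (as $\Delta_K\neq 0$) together with the hypothesis $\alpha(P(U))\cdot\beta(P(U))\geq 0$.

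To conclude, suppose for contradiction that $P(K)\in\Pos$. By \cref{thm:main} and the observation that $\alpha=1$ on $\Pos$, we would have $\alpha(P(K))\cdot\beta(P(K))=\beta(P(K))<0$, contradicting the inequality $\alpha(P(K))\cdot\beta(P(K))\geq 0$ established in both cases; hence $P(K)\notin\Pos$. The only points requiring genuine care, and where I would concentrate the effort, are invoking the satellite formula in the Conway normalization used here and the coefficient bookkeeping when $P(K)$ has an even number of components, so that $d$ may be a half-integer. The hypothesis $w\geq 2$ is precisely what prevents the second coefficient of $\Delta_K(t^w)$ from reaching the top two positions and thereby corrupting the identities for $\alpha$ and $\beta$.
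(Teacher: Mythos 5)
Your proof is correct and follows essentially the same route as the paper: apply the satellite formula $\Delta_{P(K)}(t)=\Delta_K(t^{w})\cdot\Delta_{P(U)}(t)$, deduce $\alpha(P(K))=\alpha(K)\,\alpha(P(U))$ and $\beta(P(K))=\alpha(K)\,\beta(P(U))$, multiply, and invoke \cref{thm:main}. Your extra bookkeeping (the separate $w=0$ case, the degenerate case $\Delta_{P(U)}=0$, and the explicit exponent-spacing argument showing why $|w|\neq 1$ matters) is a welcome elaboration of steps the paper leaves implicit, but it is not a different argument.
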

\begin{proof}
The Alexander polynomial of $P(K)$ equals (see e.g.~\cite[Thm.~6.15 and its proof]{zbMATH01092415})
\[
\Delta_{P(K)}(t) = \Delta_K(t^{w(P)}) \cdot \Delta_{P(U)}(t).
\]
It follows that
\begin{align*}
\alpha(P(K)) &= \alpha(K) \cdot \alpha(P(U)),\\
\beta(P(K))  &= \alpha(K) \cdot \beta(P(U)).
\end{align*}
Multiplying the two equations, one sees that the hypothesis $\alpha(P(U))\cdot \beta(P(U)) \geq 0$
implies that $\alpha(P(K))\cdot \beta(P(K)) \geq 0$.
By \cref{thm:main}, it follows that $P(K) \not\in \Pos$.
\end{proof}
The hypotheses for $P$ in \cref{cor:satellite} are e.g.~satisfied by the $(n,1)$-cable patterns for $n\geq 2$, since $P(U)$ is the unknot
(which has $\alpha(U) = 1, \beta(U) = 0$).
So in particular, \cref{cor:satellite} recovers Krishna's recent theorem that $(n,1)$-cables of non-trivial knots are never positive braid knots~\cite[Theorem~1.5]{arXiv:2312.00196}.%

Let us now give the definition of $\Pos$, and prove our results.
\begin{figure}[t]
\begingroup%
  \makeatletter%
  \providecommand\color[2][]{%
    \errmessage{(Inkscape) Color is used for the text in Inkscape, but the package 'color.sty' is not loaded}%
    \renewcommand\color[2][]{}%
  }%
  \providecommand\transparent[1]{%
    \errmessage{(Inkscape) Transparency is used (non-zero) for the text in Inkscape, but the package 'transparent.sty' is not loaded}%
    \renewcommand\transparent[1]{}%
  }%
  \providecommand\rotatebox[2]{#2}%
  \newcommand*\fsize{\dimexpr\f@size pt\relax}%
  \newcommand*\lineheight[1]{\fontsize{\fsize}{#1\fsize}\selectfont}%
  \ifx\svgwidth\undefined%
    \setlength{\unitlength}{281.16027111bp}%
    \ifx\svgscale\undefined%
      \relax%
    \else%
      \setlength{\unitlength}{\unitlength * \real{\svgscale}}%
    \fi%
  \else%
    \setlength{\unitlength}{\svgwidth}%
  \fi%
  \global\let\svgwidth\undefined%
  \global\let\svgscale\undefined%
  \makeatother%
  \begin{picture}(1,0.35352453)%
    \lineheight{1}%
    \setlength\tabcolsep{0pt}%
    \put(0,0){\includegraphics[width=\unitlength,page=1]{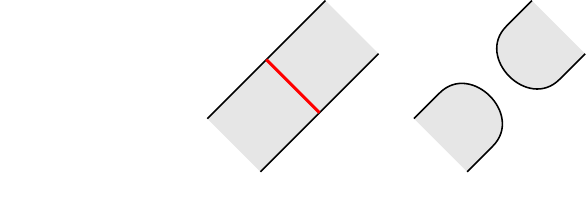}}%
    \put(0.15754369,0.00622423){\color[rgb]{0,0,0}\makebox(0,0)[t]{\lineheight{2.57999992}\smash{\begin{tabular}[t]{c}$L_+$\end{tabular}}}}%
    \put(0.51041217,0.00622423){\color[rgb]{0,0,0}\makebox(0,0)[t]{\lineheight{2.57999992}\smash{\begin{tabular}[t]{c}$L_0$\end{tabular}}}}%
    \put(0.53528678,0.21143403){\color[rgb]{1,0,0}\makebox(0,0)[t]{\lineheight{2.57999992}\smash{\begin{tabular}[t]{c}$h$\end{tabular}}}}%
    \put(0.86328066,0.00622423){\color[rgb]{0,0,0}\makebox(0,0)[t]{\lineheight{2.57999992}\smash{\begin{tabular}[t]{c}$L_-$\end{tabular}}}}%
    \put(0,0){\includegraphics[width=\unitlength,page=2]{surfaces.pdf}}%
  \end{picture}%
\endgroup%

\caption{The links $L_+, L_0, L_-$, the fiber surfaces $\Sigma_+, \Sigma_0$, the arc $h$, and the Seifert surface~$\Sigma_-$ featuring in \cref{def:pos}.}
\label{fig:def}
\end{figure}
\begin{definition}(see~\cref{fig:def})\label{def:pos}
Let $\Pos$ be the smallest set of fibered links in $S^3$ that contains the positive Hopf link
and satisfies the following.
Let $L_0\in\Pos$ with fiber surface $\Sigma_0$, and $\arc\subset\Sigma_0$ a
properly embedded arc. Let $\Sigma_-$ be the surface obtained from $\Sigma_0$ by cutting along $\arc$,
and let $\Sigma_+$ be the surface obtained from $\Sigma_0$ by plumbing a positive Hopf band to $\Sigma_0$ along $\arc$. Let us write $L_{\pm} = \partial \Sigma_{\pm}$.
If the coefficient of $t^{b_1(\Sigma_-)/2}$ in $\Delta_{L_-}(t)$ is less than or equal to $1$
(let us denote this condition by (*)),
then $L_+ \in \Pos$.
\end{definition}
Note that (*) is in particular satisfied if $L_-$ is fibered (because then the coefficient is $\pm 1$ or~$0$),
or if $L_-$ is split (because then $\Delta_{L_-}$ is zero, and so is the coefficient).
Also remark that, as plumbings of positive Hopf bands, all links in $\Pos$ are strongly quasipositive~\cite{zbMATH01097421}. 
\begin{proposition}\mbox{}
\begin{enumerate}
\item $\Pos$ is closed under connected sum.
\item $\Pos$ contains all non-trivial closures of non-split positive braids.
\item $\Pos$ contains all non-trivial arborescent positive Hopf plumbings.
\end{enumerate}
\end{proposition}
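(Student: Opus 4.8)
The common strategy in all three cases is the induction dictated by \cref{def:pos}: I present the link as the boundary $L_+$ of a surface $\Sigma_+$ obtained by plumbing a positive Hopf band along an arc $\arc$ onto a fiber surface $\Sigma_0$ whose boundary $L_0$ already lies in $\Pos$, so that the only remaining point is condition~(*). As observed just after \cref{def:pos}, (*) holds automatically as soon as the cut link $L_- = \partial\Sigma_-$ is fibered or split; hence the real work in each case is to identify $L_-$ and recognize it as one of these. I will also use that $\alpha(L) = 1$ for every $L \in \Pos$, which is established in the course of proving \cref{thm:main} (concretely, in $\Delta_{L_+} = \Delta_{L_-} + (t^{1/2}-t^{-1/2})\Delta_{L_0}$ the middle term has top degree $d(L_+) = b_1(\Sigma_+)/2$ and leading coefficient $\alpha(L_0)$, while $\Delta_{L_-}$ contributes in strictly lower degree, so $\alpha(L_+) = \alpha(L_0)$ at every step).

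For part~(1), I fix $L_1 \in \Pos$ and induct on the number of plumbing steps used to build $L_2 \in \Pos$. When $L_2$ is the positive Hopf link, I plumb a positive Hopf band onto $\Sigma_1$ along a boundary-parallel arc: this realizes $L_1 \# L_2$ as $L_+$, and since cutting along a boundary-parallel arc splits off a disk, $L_- = L_1 \sqcup U$ is split, so (*) holds. For the inductive step, write $L_2 = L_{2,+}$ as a plumbing along $\arc \subset \Sigma_{2,0}$ with $L_{2,0} \in \Pos$; by induction $L_1 \# L_{2,0} \in \Pos$, and plumbing along the same arc $\arc$, now inside $\Sigma_1 \natural \Sigma_{2,0}$, produces $L_1 \# L_2$ with $L_- = L_1 \# L_{2,-}$. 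Using $\Delta_{L_1 \# L_{2,-}} = \Delta_{L_1}\cdot\Delta_{L_{2,-}}$, the Seifert bound $\deg\Delta_{L_{2,-}} \le b_1(\Sigma_{2,-})/2$, and $\alpha(L_1)=1$, the coefficient of $t^{b_1(\Sigma_1\natural\Sigma_{2,-})/2}$ in $\Delta_{L_-}$ equals the coefficient of $t^{b_1(\Sigma_{2,-})/2}$ in $\Delta_{L_{2,-}}$; the latter is $\le 1$ exactly because (*) held when $L_2$ was built, so (*) persists and $L_1 \# L_2 \in \Pos$.

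For part~(2), I use that the Bennequin fiber surface $\Sigma$ of a non-split positive braid on $n$ strands consists of $n$ disks together with one positively twisted band per letter. Selecting one band in each of the columns $1,\dots,n-1$ (possible since the braid is non-split) yields a spanning tree whose subsurface is a disk; plumbing the remaining bands in turn, each along the co-core of a band already present in its column, exhibits $\Sigma$ as an iterated positive Hopf plumbing with the positive Hopf link as its starting point. At each such step cutting along the co-core simply deletes a band, so $L_-$ is the closure of a positive braid with one letter removed; such a closure is fibered when non-split and has zero Alexander polynomial when split, so (*) holds either way, and non-triviality of the closure guarantees that at least one band is plumbed.

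For part~(3), I induct on the number of bands and peel off a leaf of the plumbing tree $T$: then $\Sigma_+$ is $\Sigma_0$ with a positive Hopf band plumbed along an arc $\arc$, where $\Sigma_0$ is the arborescent plumbing on $T$ minus the leaf and $\arc$ is a spanning arc of the neighbouring band $H_v$ meeting its core once. The step I expect to be the main obstacle is the topological claim that cutting $\Sigma_0$ along $\arc$ opens the annulus $H_v$ into a disk and severs the tree at $v$, so that the subtrees hanging off $v$ become joined only through a disk; granting this, $\Sigma_-$ is their boundary connected sum and $L_-$ is the connected sum of the corresponding smaller arborescent Hopf plumbings (with the degenerate cases where $v$ has degree at most one in $T$ minus the leaf giving a smaller plumbing or the unknot). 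In every case $L_-$ is a connected sum of fibered links, hence fibered, so (*) holds and $L_+ \in \Pos$. Pinning down $\arc$ as a genuinely properly embedded spanning arc and verifying the homeomorphism type of $\Sigma_-$ precisely is the part that will require the most care.
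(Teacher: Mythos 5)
Parts (1) and (3) of your proposal are correct and essentially the paper's own argument. For (1) the paper inducts on the first Betti number of the fiber surface of the connected sum rather than on the number of plumbing steps, but the decomposition is the same: peel the last plumbing off one summand, note that the cut link becomes a connected sum with the other factor, and check that (*) survives because $\alpha=1$ and $\Delta$ is multiplicative under connected sum; your explicit handling of the base case via a boundary-parallel arc (so that $L_-$ is split) is if anything more careful than the paper's. Part (3) is the paper's proof verbatim, up to your swapping of the roles of the leaf $v$ and its parent $w$ in the notation.

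Part (2) contains a genuine gap. You claim that, starting from a spanning set of bands (one per column), the remaining bands can be added ``in turn, each along the co-core of a band already present in its column.'' But plumbing a positive Hopf band along the co-core of an existing $\sigma_k$-band inserts the new band \emph{adjacent} to that band; it cannot place the new band at an arbitrary height in column $k$. If the band to be added is separated from every $\sigma_k$-band already present by bands in columns $k\pm1$, the natural annulus (the two $\sigma_k$-bands together with the strips of disks $k$ and $k+1$ between them) meets the rest of the surface in more than a single square, so that step is not a plumbing as it stands. Concretely, running your recipe on $\sigma_1\sigma_2\sigma_1\sigma_2$ starting from the tree $\sigma_1\sigma_2$ produces the Bennequin surface of $\sigma_1^2\sigma_2^2$, whose closure is a different link. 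Some order of additions does work, but proving that is exactly the missing input: the paper invokes Rudolph's observation that a positive braid word whose closure is not an unlink can be rewritten, up to conjugation, to begin with $\sigma_k^2$, so that the two top $\sigma_k$-bands are genuinely adjacent and deplumbable, with $L_0$ and $L_-$ the closures of $\sigma_k\beta$ and $\beta$. Your reason why (*) holds for $L_-$ (a positive braid closure with one letter deleted is split or fibered) is the right one; what needs justification is that a deletable letter realizing a Hopf deplumbing exists at every stage.
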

\begin{proof}
\textbf{(1)} Let $L, L' \in \Pos$. Let us show that $L \# L' \in \Pos$ (where the connected sum $\#$ is taken along any choice of components), by induction over the first Betti number $b$ of the fiber surface of $L \# L'$. If $b = 2$, then $L$ and $L'$ are both the positive Hopf link, and so~$L \# L' \in \Pos$.
Otherwise, w.l.o.g.~$L_+$ is not the positive Hopf link. Then, by definition of~$\Pos$, one may pick links $L_-$ and $L_0$ for $L_+ = L$ as in \cref{def:pos}. Hence $L_+ \# L'$ is obtained from $L_0 \# L'$ by plumbing a positive Hopf band along an arc $h$, such that cutting along $h$ yields $L_- \# L'$. The link $L_0 \# L'$ is in $\Pos$ by induction. Since $\alpha(L') = 1$ (see proof of \cref{thm:main}) and $L_-$ satisfies (*), so does $L_- \# L'$. It follows that~$L_+ \# L' = L \# L'\in \Pos$.

\textbf{(2)} Let $L$ be a non-trivial non-split link that is the closure of a positive braid in the braid group on $n$ strands, whose standard generators we write as~$\sigma_1, \ldots, \sigma_{n-1}$.
We show by induction over the first Betti number $b$ of the fiber surface of $L$ that $L\in \Pos$.
If $b = 1$, then $L$ is the positive Hopf link, and so $L\in \Pos$.
Let us now assume $b\geq 2$.
An elegant argument by Rudolph~\cite[Pf.~of~last~Prop.]{zbMATH03826789} shows that a positive braid link that is not an unlink can be written as the closure of a positive braid that starts with the square of a generator,
i.e.~is of the form $\sigma_k^2\beta$ with positive~$\beta$.
So let us pick such a braid $\sigma_k^2\beta$ with closure~$L$.
Let $L_0$ and $L_-$ be the links obtained as closures of $\sigma_k \beta$ and $\beta$, respectively.
One may choose an arc $h$ in the fiber surface of $L_0$ such that
plumbing a positive Hopf band along $h$ yields $L_+=L$,
and cutting along $h$ yields $L_-$.
Since $b \geq 2$, it follows that $L_0$ is non-trivial. Moreover, $L_0$ is the closure of the non-split positive braid~$\sigma_k \beta$. Hence $L_0 \in\Pos$ follows by the induction hypothesis.
Finally, $L_-$ is the closure of a positive braid. Thus $L_-$ is either split or fibered.
In either case, $L_-$ satisfies condition~(*). Thus $L_+ = L \in \Pos$ by definition of~$\Pos$.

\textbf{(3)} Let $T$ be a plane tree, and $L$ the associated link,
obtained by placing a positive Hopf band at each vertex of $T$,
and plumbing the bands according to the edges of $T$ (see e.g.~\cite[Ch.~12]{bs},~\cite{MR2165205,zbMATH03938045}).
We proceed by induction over the number of vertices of $T$.
If $T$ consists of a single vertex,
then $L$ is a positive Hopf link, so $L\in\Pos$.
Otherwise, pick a leaf $v$ of $T$ with parent $w$.
Let $L_0$ be the link associated with the plane tree $T\setminus \{v\}$.
There is an arc $h$ in the fiber surface of $L_0$ such that plumbing a positive
Hopf band along $h$ yields the fiber surface of $L_+ = L$, and cutting along $h$
yields a link $L_-$ that is a connected sum of the links associated with the plane tree components of the forest $T\setminus \{v,w\}$. By induction, $L_0 \in \Pos$.
Moreover, $L_-$ satisfies (*) since it is fibered (though it may be not in $\Pos$, namely if $T\setminus \{v,w\}$ is empty and $L_-$ thus the unknot). By definition of $\Pos$, it follows that $L_+ = L\in\Pos$.
\end{proof}
\begin{proof}[Proof of \cref{thm:main}]
Let $L \in \Pos$.
Let us write $b$ for the first Betti number of the fiber surface of $L$, so that $d(L) = b/2$. 
We show $\alpha(L) = a_{b/2}(L) = 1$ and $\beta(L) = a_{b/2 - 1}(L) \leq -1$
by induction over $b$.
If $b = 1$, then $L$ is the positive Hopf link,
with Alexander polynomial $t^{1/2} - t^{-1/2}$
satisfying $\alpha = 1$ and $\beta = -1$.
If $b \geq 2$, by definition there exists a fibered link $L_0 \in \Pos$,
whose fiber surface $\Sigma_0$ contains an arc $\arc\subset\Sigma_0$
such that plumbing a positive Hopf band along $\arc$ yields the fiber surface $\Sigma_+$ of $L_+ = L$,
and cutting along $\arc$ yields a Seifert surface $\Sigma_-$
with $L_- = \partial\Sigma_-$ satisfying~(*).
Since $L_0$ is fibered and $b_1(\Sigma_0) = b - 1$, we have $d(L_0) = b/2 - 1/2$.
Since $L_-$ is the boundary of the Seifert surface $\Sigma_-$ (which may or may not be a fiber surface) with $b_1(\Sigma_-) = b - 2$,
we have $d(L_-) \leq b/2 - 1$. By (*), the coefficient $c$ of $t^{b/2 - 1}$ in $\Delta_{L_-}$
is less than or equal to $1$.
Now, the skein relation 
holds for $L_{\pm}, L_0$ (compare \cref{fig:skein,fig:def}).
Writing $o$ for any linear combinations of powers of $t$ with exponent less than $b/2 - 1$,
we get
\begin{multline*}
\alpha(L_+) t^{b/2} + \beta(L_+) t^{b/2-1} + o = \\
c\cdot t^{b/2-1} + (t^{1/2}  - t^{-1/2})(\alpha(L_0)t^{b/2-1/2} + \beta(L_0)t^{b/2-3/2}) + o.
\end{multline*}
Equating the coefficients of $t^{b/2}$ and $t^{b/2-1}$ gives
\[
\alpha(L_+)   = \alpha(L_0) \qquad\text{and}\qquad
\beta(L_+) = c - \alpha(L_0) + \beta(L_0).
\]
We have $\alpha(L_0) = 1$ and $\beta(L_0) \leq -1$ by induction hypothesis,
and  $c \leq 1$ by~(*).
It follows that $\alpha(L_+) = 1$ and $\beta(L_+) \leq -1$ as desired.
\end{proof}

While I was writing this text, Ito independently
obtained an obstruction similar to \cref{cor:satellite}.
Namely, Theorem~A.2 in \cite{arXiv:2402.01129},
the proof of which likewise relies on the Alexander polynomial,
states that satellites with patterns satisfying a certain condition (different from the condition in \cref{cor:satellite}) are never positive braid knots.%
\medskip

\paragraph{\textbf{Acknowledgments.}}\quad
My gratitude belongs to Siddhi Krishna for discussions about the question
which cables are positive braid knots,
to Filip Misev and Peter Feller for inspiring conversations,
and to Peter Feller also for comments on a first version of the text.
\bibliographystyle{myamsalpha}
\bibliography{References_arXiv}
\end{document}